\newtheorem{theorem}{Theorem}
\newtheorem{lemma}[theorem]{Lemma}
\newtheorem{corollary}[theorem]{Corollary}
\theoremstyle{definition}
\newtheorem{example}{Example}
\theoremstyle{remark}
\newtheorem{remark}[theorem]{Remark}
\numberwithin{equation}{section}
\DeclareMathAlphabet{\matheur}{U}{eur}{m}{n}
\newcommand{\m}{\mathrm{m}}
\newcommand{\C}{\mathbb{C}}
\newcommand{\R}{\mathbb{R}}
\newcommand{\Q}{\mathbb{Q}}
\newcommand{\Z}{\mathbb{Z}}
\newcommand{\re}{\mathop{\mathrm{Re}}} 
\renewcommand\d{{\mathrm d}}
\mathchardef\pFcomma=\mathcode`, 
\renewcommand{\d}{\mathrm d}
\begin{document}
\title[A functional identity for Mahler measures of non-tempered polynomials]{A functional identity for Mahler measures of non-tempered polynomials}

\author{Detchat Samart}
\address{Department of Mathematics, Faculty of Science, Burapha University, Chonburi, Thailand 20131} \email{petesamart@gmail.com}



\date{\today}

\maketitle

\begin{abstract}
We establish a functional identity for Mahler measures of the two-parametric family $P_{a,c}(x,y)=a(x+1/x)+y+1/y+c$. Our result extends an identity proven in a paper of Lal\'{i}n, Zudilin and Samart. As a by-product, we obtain evaluations of $\m(P_{a,c})$ for some algebraic values of $a$ and $c$ in terms of special values of $L$-functions and logarithms. We also give a sufficient condition for validity of a certain identity between the elliptic integrals of the first and the third kind, which implies several identities for $\m(P_{a,c})$.
\end{abstract}

\section{Background and motivation}\label{S:background}

The (logarithmic) Mahler measure of a nonzero Laurent polynomial $P\in \C[x_1^{\pm 1},\ldots,x_n^{\pm 1}]$ is defined by 
\begin{align*}
\m(P)&= \frac{1}{(2\pi i)^n} \idotsint\limits_{|x_1|=\cdots= |x_n|=1} \log |P(x_1,\ldots,x_n)|\frac{dx_1}{x_1}\cdots\frac{dx_n}{x_n}\\
&=\int_0^1 \cdots \int_0^1 \log |P(e^{2\pi i \theta_1},\ldots,e^{2\pi i \theta_n})| d\theta_1 \cdots d\theta_n.
\end{align*} 
In recent work \cite{LSZ}, Lal\'{i}n, Zudilin and the present author investigated Mahler measures of the family 
\begin{equation*}
P_{a,c}:=P_{a,c}(x,y)= a\left(x+\frac{1}{x}\right)+y+\frac{1}{y}+c,
\end{equation*}
where $a$ and $c$ are complex parameters with $a\ne 0$. The one-parametric family 
\begin{equation*}
P_{1,k}(x,y)= x+\frac{1}{x}+y+\frac{1}{y}+k
\end{equation*}
has long been studied and, for certain values of $k$, the Mahler measure of $P_{1,k}$ is known (at least numerically) to be a rational multiple of an $L$-value of the elliptic curve defined by the zero locus of $P_{1,k}$. More precisely, Boyd \cite[Tab.~1]{Boyd} and Rodriguez Villegas \cite[Tab.~4]{RV} discovered from their numerical computations that for several values of $k$ such that $k^2\in \mathbb{Z}$ 
\begin{equation}\label{E:Boyd}
\m(P_{1,k})\stackrel{?}= r_k L'(E_k,0),
\end{equation}
where $r_k\in \Q^\times$, $E_k$ is the minimal model of the curve \[A_k: y^2 =x^3+\frac{k^2}{8}\left(\frac{k^2}{8}-1\right)x^2+\frac{k^4}{256}x,\] and $A\stackrel{?}= B$ means the real numbers $A$ and $B$ agree to at least $25$ decimal places. In addition, they hypothesized that $1/r_k \in \Z$ for all sufficiently large $k$. Although a great amount of effort has been put in by many researchers to rigorously verify $\eqref{E:Boyd}$, only a small number of cases are known to be true. We give a comprehensive list of proven identities together with the values of $r_k$ in Table~\ref{Ta:Pk} below. (Note that [A]+[B] in the last column means that the formula(s) in the same row is a direct consequence of results in [A] and [B], while their proofs may not have been given explicitly.)
\begin{table}[ht]\label{Ta:Pk}
\centering \def\arraystretch{1.1}
    \begin{tabular}{ | c | c | c | c |}
    \hline
    $k$ & Conductor of $E_k$ & $r_k$ & Reference(s)\\ \hline
    $1$ & $15$ & $1$ & \cite{RZ15}, \cite{Zudilin}  \\ \hline
    $3i,5,16$ & $15$ & $5,6,11$ & \cite{Lalin10}$+$\cite{RZ15}  \\ \hline
    $i$ & $17$ & $2$ & \cite{Zudilin}  \\ \hline
    $3$ & $21$ & $2$ & \cite{Brunault}, \cite{LSZ}  \\ \hline
    $2$ & $24$ & $1$ & \cite{LR}, \cite{RZ}, \cite{Zudilin}  \\ \hline
    $3\sqrt{2}$ & $24$ & $\frac{5}{2}$ & \cite{RV}  \\ \hline
    $8$ & $24$ & $4$ & \cite{LR}, \cite{RZ}  \\ \hline
    $\sqrt{2}i$ & $24$ & $\frac{3}{2}$ & \cite{RV}+\cite{LR}  \\ \hline
    $4i,2\sqrt{2}$ & $32$ & $2,1$ & \cite{RV}  \\ \hline
    $2i$ & $40$ & $1$ & \cite{Mellit}, \cite{Zudilin}  \\ \hline
    $12$ & $48$ & $2$ & \cite{Brunault}  \\ \hline
    $\sqrt{2}$ & $56$ & $\frac{1}{4}$ & \cite{Zudilin}  \\ \hline
    $4\sqrt{2}$ & $64$ & $1$ & \cite{RV}  \\ \hline
    \end{tabular}
\caption{Proven formulas for $\m(P_{1,k})$}
\label{T2}
\end{table}

Let $P\in \C[x^{\pm 1},y^{\pm 1}]$ for which there exist $a,b\in \Z$ such that $x^ay^bP$ is a monic quadratic polynomial in variable $y$ with coefficients in $\C[x,1/x]$. Then we can write $x^ay^bP$ as
\[x^ay^bP = y^2+B(x)y+C(x)= (y-y_+(x))(y-y_-(x)),\]
where $y_{\pm}(x)=(-B(x)\pm \sqrt{B(x)^2-4C(x)})/2$. Using the above factorization, the notion of {\it half-Mahler measures} was introduced in \cite{LSZ}. They arise naturally as two quantities $\m^{\pm}(P)$ which make up the Mahler measure $\m(P)$, namely
\[\m^{\pm}(P)= \frac{1}{2\pi i}\int_{|x|=1}\log^+|y_{\pm}(x)|\frac{dx}{x}=\int_0^1 \log^+|y_{\pm}(e^{2\pi i\theta})|d\theta,\]
where $\log^+(r)= \max\{0,\log r\}$. The fact that $\m(P)= \m^+(P)+\m^-(P)$ follows directly from Jensen's formula. It was proven in \cite[Thm.~2]{LSZ} that for $0<k<4$ 
\begin{align} 
\m(P_{1,k})&= \m^{-}(P_{a,c})-3\m^{+}(P_{a,c}),\label{E:LSZ}\\
\m(P_{a,c})&= \log a, \label{E:log}
\end{align}
where \[a=\sqrt{(4+k)/(4-k)}, \quad c=k/\sqrt{4-k}.\] The authors of \cite{LSZ} also apply a special case of \eqref{E:LSZ} and \eqref{E:log} to obtain a rigorous proof of \eqref{E:Boyd} for $k=3$, with the aid of Ramanujan's modular equation in level $21$ and a formula for regulators of modular units due to Brunault, Mellit, and Zudilin \cite{Zudilin}. The main objective of this paper is to extend \eqref{E:LSZ} for $k>4$. For the rest of this section, we shall assume that $a$ and $c$ are given in terms of $k$ as above.
\begin{theorem}\label{T:main}
Let $k\in (4,\infty)$. Then we have
\begin{equation}\label{E:main}
\m(P_{1,k})=
 2(\m^{+}(P_{a,c})-\m^{-}(P_{a,c}))+\frac{1}{2}\log \left(\mfrac{k-4}{k+4}\right).
\end{equation}
\end{theorem}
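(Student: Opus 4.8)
The plan is to follow the same strategy that yields \eqref{E:LSZ} and \eqref{E:log} in \cite{LSZ}, but to re-examine carefully which branch of the square root governs the half-Mahler measures when $k>4$ instead of $0<k<4$. First I would put $P_{a,c}$ in monic quadratic form in $y$, writing $xP_{a,c}=x\bigl(y^2+(c+a(x+1/x))y+1\bigr)/y$ so that the two roots $y_\pm(x)$ satisfy $y_+(x)y_-(x)=1$. Hence exactly one of $|y_+(e^{2\pi i\theta})|$, $|y_-(e^{2\pi i\theta})|$ exceeds $1$ at each $\theta$ (away from the locus where both equal $1$), and $\m^+(P_{a,c})-\m^-(P_{a,c})$ reduces to an integral of $\log|y_+/y_-|=\pm\log|B^2-4C|^{1/2}/|\cdots|$ type, which after the substitution connecting $(a,c)$ to $k$ should be expressible through the same elliptic integral of the first (and third) kind that appears in the known formula for $\m(P_{1,k})$.

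The key computational input is the explicit parametrization: with $a=\sqrt{(4+k)/(4-k)}$ and $c=k/\sqrt{4-k}$, one has $a^2-1=8/(4-k)$, $c^2=k^2/(4-k)$, and the discriminant $B(x)^2-4C(x)=\bigl(c+a(x+1/x)\bigr)^2-4$ factors nicely; the branch points on $|x|=1$ are determined by $\cos\theta$, and for $k>4$ the quantity $4-k$ is negative, so the signs of the various radicands flip relative to the $0<k<4$ case. I would redo the contour analysis of \cite[proof of Thm.~2]{LSZ} tracking these sign changes: the region of integration where $|y_+|>1$ versus $|y_-|>1$ changes, and the constant term that was $\log a$ in \eqref{E:log} becomes $\tfrac12\log\bigl((k-4)/(k+4)\bigr)$ because $\log a=\tfrac12\log\bigl((4+k)/(4-k)\bigr)=\tfrac12\log\bigl(-(k+4)/(k-4)\bigr)$, and one must extract the real part correctly. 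Concretely, I expect the identity to come out of comparing two expressions for $\m^+(P_{a,c})+\m^-(P_{a,c})=\m(P_{a,c})$ and for $\m^+(P_{a,c})-\m^-(P_{a,c})$: the sum is evaluated directly via Jensen's formula applied to the $x$-integral (giving a logarithm), while the difference is matched against the Rodriguez Villegas–type expression for $\m(P_{1,k})$ as a period of the associated elliptic curve, using that $P_{1,k}$ and $P_{a,c}$ define isogenous (in fact, essentially the same) elliptic curves under the change of variables implicit in the substitution.

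More precisely, I would proceed in the following steps. \textbf{(1)} Express $\m^{\pm}(P_{a,c})$ as $\int_0^1 \log^+|y_\pm(e^{2\pi i\theta})|\,d\theta$ and use $y_+y_-=1$ to write $\m^+(P_{a,c})-\m^-(P_{a,c}) = \int_{\Theta}\log|y_{\ast}(e^{2\pi i\theta})|\,d\theta$ over the appropriate sub-arc $\Theta\subset[0,1]$ where the dominant root is fixed, with a sign depending on which root dominates; identify $\Theta$ and the sign for $k>4$ by analyzing the curve $\bigl(c+2a\cos(2\pi\theta)\bigr)^2=4$. \textbf{(2)} Differentiate both sides of the conjectural identity with respect to $k$, reducing to an identity between the $k$-derivative of $\m(P_{1,k})$ — which is known to be an elliptic integral (a period), see the regulator/Beilinson setup implicit in \cite{RV} — and the $k$-derivative of the right-hand side, which is an explicit elliptic integral of the first kind plus the elementary derivative of $\tfrac12\log((k-4)/(k+4))$. \textbf{(3)} Verify the resulting integral identity, and then \textbf{(4)} pin down the integration constant by evaluating both sides at a single accessible point, e.g. the limit $k\to\infty$ or $k\to 4^+$, where $\m(P_{1,k})\sim \log k$ and the right-hand side can be estimated asymptotically. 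The main obstacle I anticipate is step \textbf{(1)}: correctly determining the sub-arc $\Theta$ and the branch of the square root for $k>4$, since $4-k<0$ makes $a$ and $c$ genuinely complex (or forces a careful choice of signs), and the geometry of $\{|y_+|=|y_-|=1\}$ on the torus is different from the $0<k<4$ regime — getting the endpoints of $\Theta$ and the overall sign wrong would shift the elementary term and the factor $2$ in \eqref{E:main}. A secondary obstacle is justifying differentiation under the integral sign uniformly near the branch points, which may require splitting off the singular contributions explicitly.
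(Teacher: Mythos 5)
Your plan is essentially the paper's proof: differentiate both sides in $k$, identify $\frac{d}{dk}\m(P_{1,k})=\frac{2}{k\pi}K(4/k)$ and the derivative of $\m^+(P_{a,c})-\m^-(P_{a,c})$ as complete elliptic integrals, verify the resulting $K$--$\Pi$ identity, and fix the integration constant by letting $k\to\infty$. The two obstacles you flag are resolved in the paper as follows: the branch/sub-arc issue in your step (1) is handled by passing to $\tilde{P}_k(x,y)=-iP_{a,c}(x,iy)$, which has real coefficients and $y_+y_-=-1$, so the dominant root switches exactly at $\cos\theta=\frac{k}{2\sqrt{k+4}}$; and the integral identity of your step (3) turns out to be $\Pi\bigl(-\tfrac{4}{k},\tfrac{4}{k}\bigr)-\tfrac12 K\bigl(\tfrac{4}{k}\bigr)=\tfrac{k\pi}{4(k+4)}$, a special case of a classical $\Pi$--$K$ relation which the paper establishes via a general ODE criterion in its appendix.
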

We prove Theorem~\ref{T:main} in Section~\ref{S:proof}. In addition, we show that $\m^-(P_{a,c})=0$ for $k>2(1+\sqrt{5})$, so the following result follows easily.

\begin{corollary}\label{Cor:1}
If $k>2(1+\sqrt{5})$, then the following identity is true:
\begin{equation}\label{E:main2}
\m(P_{1,k})=2\m(P_{a,c})+\frac{1}{2}\log \left(\mfrac{k-4}{k+4}\right).
\end{equation}
\end{corollary}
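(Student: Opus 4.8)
\textbf{Proof proposal for Corollary~\ref{Cor:1}.}
The plan is to deduce the corollary directly from Theorem~\ref{T:main} by showing that the half-Mahler measure $\m^{-}(P_{a,c})$ vanishes once $k$ is large enough, for then \eqref{E:main} collapses to \eqref{E:main2} since $\m(P_{a,c})=\m^{+}(P_{a,c})+\m^{-}(P_{a,c})=\m^{+}(P_{a,c})$. So the real content is the claim that $\m^{-}(P_{a,c})=0$ for $k>2(1+\sqrt5)$.

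To prove this, first I would normalize $P_{a,c}$ into a monic quadratic in $y$: writing $y P_{a,c}= y^2 + (a(x+1/x)+c)\,y + 1$, so that with $B(x)=a(x+1/x)+c$ and $C(x)=1$ the two roots are $y_{\pm}(x)=\bigl(-B(x)\pm\sqrt{B(x)^2-4}\bigr)/2$, and they satisfy $y_+(x)y_-(x)=1$. By definition $\m^{-}(P_{a,c})=\int_0^1 \log^{+}|y_{-}(e^{2\pi i\theta})|\,d\theta$, so it suffices to show that $|y_{-}(x)|\le 1$ for all $x$ on the unit circle, equivalently (since the product of the roots has modulus $1$) that $|y_{+}(x)|\ge 1$ on $|x|=1$; the branch of the square root should be chosen consistently so that $y_-$ is the root of smaller modulus. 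On $|x|=1$ we have $x+1/x=2\cos(2\pi\theta)\in[-2,2]$, so $B=B(e^{2\pi i\theta})$ ranges over the real interval $[c-2a,\,c+2a]$. Thus everything reduces to a one-variable question: for which real $B$ is it true that the quadratic $t^2+Bt+1$ has no root strictly inside the unit disc? The roots are non-real (hence of modulus $1$) exactly when $|B|<2$, and they are real with the smaller one of modulus $\le 1$ precisely when $|B|\ge 2$; in all cases the root of smaller modulus has modulus $\le 1$, with equality when $|B|\le 2$. Hence $|y_-|\le 1$ holds \emph{for every} real $B$, so in fact $\m^{-}(P_{a,c})=0$ as soon as $B(e^{2\pi i\theta})$ is real for all $\theta$, i.e. whenever $a,c\in\RR$. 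I would double-check against the definitions in \cite{LSZ} whether the relevant root labelled $y_-$ is indeed the one of smaller modulus on the unit circle; if the labelling is the opposite, the same computation shows instead that $\m^{+}(P_{a,c})=0$, and one must then re-examine which half-measure survives.

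Given that subtlety, the honest version of the argument is: plug $a=\sqrt{(4+k)/(4-k)}$ and $c=k/\sqrt{4-k}$ into $B(x)=a(x+1/x)+c$ and check on $|x|=1$ which of $y_\pm$ can exceed $1$ in modulus; the hypothesis $k>2(1+\sqrt5)$ should be exactly the threshold past which the surviving half-measure picks up the whole of $\m(P_{a,c})$ while the other half-measure is identically zero. Concretely I expect the bookkeeping to go through the discriminant $B(x)^2-4$ and the sign of $\Re\sqrt{B(x)^2-4}$ along the circle, and the constant $2(1+\sqrt5)$ to emerge from requiring that the relevant inequality $|B(e^{2\pi i\theta})|\ge 2$ (or its reverse) hold uniformly in $\theta$, i.e. at the endpoint $\theta$ where $x+1/x$ takes the value that makes $|B|$ smallest in absolute value.

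The main obstacle is therefore not any deep input but a careful and correct identification of the branch defining $y_-$ and a clean verification that on the unit circle the quantity $|y_-(x)|$ never exceeds $1$ for $k$ in the stated range; the elliptic-integral machinery of the rest of the paper is not needed here, and once the vanishing of $\m^{-}(P_{a,c})$ is established the corollary is immediate from \eqref{E:main} together with $\m(P_{a,c})=\m^{+}(P_{a,c})+\m^{-}(P_{a,c})$.
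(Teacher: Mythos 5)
There is a genuine gap, and it sits exactly at the one computation the corollary actually requires. Your overall structure is the same as the paper's: deduce \eqref{E:main2} from Theorem~\ref{T:main} by showing $\m^{-}(P_{a,c})=0$, so that $h(k)=\m^{+}(P_{a,c})=\m(P_{a,c})$. But your verification of the vanishing is based on a false premise. For $k>4$ the parameters $a=\sqrt{(4+k)/(4-k)}$ and $c=k/\sqrt{4-k}$ are \emph{purely imaginary}, not real, so $B(x)=a(x+1/x)+c$ is purely imaginary on $|x|=1$; it does not range over a real interval $[c-2a,c+2a]$. Your analysis of $t^2+Bt+1$ for real $B$ (concluding that the smaller root always has modulus $\le 1$, hence $\m^{-}=0$ for all real $a,c$) is therefore inapplicable — and had it applied, it would prove the corollary for all $k>4$ with no threshold, which should have been a warning sign. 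With $B=i\beta$, $\beta\in\RR$, the roots are $i\bigl(-\beta\pm\sqrt{\beta^2+4}\bigr)/2$: their moduli multiply to $1$ but one is strictly greater than $1$ and the other strictly less whenever $\beta\neq 0$, and \emph{which} of $y_\pm$ is the large one flips with the sign of $\beta$. The paper handles this by substituting $y\mapsto iy$ to get the real-coefficient polynomial $\tilde{P}_k$ with $y\tilde{P}_k=y^2+B(x)y-1$ (constant term $-1$, so $y_+y_-=-1$), observing that half-Mahler measures are unchanged, and then checking that $B(e^{i\theta})=(2\sqrt{k+4}\cos\theta-k)/\sqrt{k-4}<0$ for \emph{all} $\theta$ precisely when $2\sqrt{k+4}<k$, i.e.\ $k^2-4k-16>0$, i.e.\ $k>2(1+\sqrt5)$. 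Uniform negativity of $B$ forces $|y_-|<1<|y_+|$ on the whole circle, hence $\m^{-}(\tilde{P}_k)=\m^{-}(P_{a,c})=0$.

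Your ``honest version'' paragraph correctly names what remains to be done but does not do it, and your guess for where the constant $2(1+\sqrt5)$ comes from (requiring $|B|\ge 2$ uniformly) is also off: since the relevant product of roots is $-1$, one root is always outside the unit circle regardless of $|B|$; the threshold is the condition that $2\sqrt{k+4}\cos\theta-k$ keeps a constant sign in $\theta$, so that the \emph{same} labelled branch stays inside the disc everywhere. For $4<k<2(1+\sqrt5)$ the sign changes along the circle, both half-measures are nonzero, and \eqref{E:main2} genuinely fails — which is why the full Theorem~\ref{T:main} keeps both $\m^{+}$ and $\m^{-}$. To repair your proof, carry out the $y\mapsto iy$ normalization (or equivalently track the sign of $\Im B$ directly) and verify the uniform sign condition; the rest of your deduction from Theorem~\ref{T:main} is fine.
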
 
Note that, after desingularization, the curve $P_{a,c}=0$ is $2$-isogenous to the curve $P_{1,k}=0$ \cite[Sect.~5]{LSZ}, so their $L$-functions coincide. Applying Corollary~\ref{Cor:1} together with the known results in Table~\ref{Ta:Pk}, we immediately obtain the following interesting evaluation of $\m(P_{a,c})$ for some (complex) algebraic values of $a$ and $c$.
\begin{corollary}\label{Cor:main}
For $k\in \{4\sqrt{2},8,12,16\}$, the following formula holds:
\begin{equation}\label{E:nt}
\m(P_{a,c})= \frac{r_k}{2}L'(E_k,0)-\frac{1}{4}\log \left(\frac{k-4}{k+4}\right),
\end{equation}
where $r_k$ is as given in Table~\ref{Ta:Pk}.
\end{corollary}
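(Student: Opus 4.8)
The plan is to deduce Corollary~\ref{Cor:main} as an immediate consequence of Corollary~\ref{Cor:1} together with the proven formulas recorded in Table~\ref{Ta:Pk}. First I would check that each of the four values $k\in\{4\sqrt2,8,12,16\}$ satisfies the hypothesis $k>2(1+\sqrt5)$ of Corollary~\ref{Cor:1}: since $2(1+\sqrt5)\approx 6.47$ and the smallest of the four values is $4\sqrt2\approx 5.66$, one must be slightly careful — in fact $4\sqrt2<2(1+\sqrt5)$, so for $k=4\sqrt2$ one cannot invoke Corollary~\ref{Cor:1} directly but must instead use Theorem~\ref{T:main} and verify separately (via the analysis in Section~\ref{S:proof} showing $\m^-(P_{a,c})=0$ in the relevant range, or by a direct estimate on $|y_-(x)|$ for $|x|=1$) that the $\m^-$ term still vanishes for this particular $k$; the remaining three values $8,12,16$ all exceed $6.47$ and are handled by Corollary~\ref{Cor:1} verbatim.

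Granting that $\m^-(P_{a,c})=0$ for all four values, Corollary~\ref{Cor:1} (or \eqref{E:main} with the $\m^-$ term dropped) gives
\begin{equation*}
\m(P_{1,k})=2\m(P_{a,c})+\tfrac12\log\!\left(\mfrac{k-4}{k+4}\right),
\end{equation*}
and solving for $\m(P_{a,c})$ yields
\begin{equation*}
\m(P_{a,c})=\tfrac12\,\m(P_{1,k})-\tfrac14\log\!\left(\mfrac{k-4}{k+4}\right).
\end{equation*}
Next I would substitute the evaluation $\m(P_{1,k})=r_kL'(E_k,0)$, which holds as a \emph{theorem} (not merely numerically) for each $k\in\{4\sqrt2,8,12,16\}$ according to the Reference(s) column of Table~\ref{Ta:Pk} — namely $k=4\sqrt2$ from \cite{RV} with conductor $64$, and $k=8,12,16$ from \cite{LR}+\cite{RZ}, \cite{Brunault}, and \cite{Lalin10}+\cite{RZ15} respectively — with the correspondingly tabulated rational numbers $r_k\in\{1,4,2,11\}$. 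This substitution gives exactly \eqref{E:nt}.

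The only genuine subtlety — and the step I expect to be the main obstacle — is the endpoint issue at $k=4\sqrt2$, where the clean hypothesis of Corollary~\ref{Cor:1} narrowly fails; here one needs the sharper input that $\m^-(P_{a,c})$ already vanishes before $k$ reaches $2(1+\sqrt5)$, which follows from tracking the sign of $\log^+|y_-(e^{2\pi i\theta})|$ in the proof of Theorem~\ref{T:main} and noting that $y_-$ stays inside the closed unit disc on $|x|=1$ for a range of $k$ extending below $2(1+\sqrt5)$ (the bound $2(1+\sqrt5)$ being sufficient but not necessary). Beyond that, everything is a direct citation: the $L$-function of $E_k$ is unaffected by the $2$-isogeny relating $P_{a,c}=0$ to $P_{1,k}=0$ noted in \cite[Sect.~5]{LSZ}, so no further conductor or isogeny bookkeeping is required, and the corollary follows.
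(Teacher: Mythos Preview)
Your overall strategy---combine Corollary~\ref{Cor:1} with the proven entries of Table~\ref{Ta:Pk} and solve for $\m(P_{a,c})$---is exactly the paper's, and it works verbatim for $k\in\{8,12,16\}$.

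You are right to flag $k=4\sqrt{2}$: since $4\sqrt{2}\approx 5.66<2(1+\sqrt{5})\approx 6.47$, the hypothesis of Corollary~\ref{Cor:1} fails there, a point the paper glosses over. However, your proposed repair---that $\m^{-}(P_{a,c})$ still vanishes for this $k$ because the bound $2(1+\sqrt{5})$ is ``sufficient but not necessary''---is incorrect. The threshold is in fact sharp. From the analysis in the proof of Lemma~\ref{L:main}, one has $|y_{-}(e^{i\theta})|>1$ exactly when $\cos\theta>k/(2\sqrt{k+4})$, and this inequality has real solutions $\theta$ if and only if $k/(2\sqrt{k+4})<1$, i.e.\ $k^2-4k-16<0$, i.e.\ $k<2(1+\sqrt{5})$. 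For $k=4\sqrt{2}$ one computes $k^{2}-4k-16=16(1-\sqrt{2})<0$, so there is a genuine arc of the unit circle on which $|y_{-}|>1$, and hence $\m^{-}(P_{a,c})=\m^{-}(\tilde{P}_k)>0$ strictly.

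Consequently, for $k=4\sqrt{2}$ Theorem~\ref{T:main} only produces a closed form for the \emph{difference} $\m^{+}(P_{a,c})-\m^{-}(P_{a,c})$, not for the Mahler measure $\m(P_{a,c})=\m^{+}(P_{a,c})+\m^{-}(P_{a,c})$; this is precisely the obstruction the paper's own Remark after Corollary~\ref{Cor:main} describes for $k=3\sqrt{2}$ and $k=5$. So neither your argument nor the paper's one-line justification actually covers $k=4\sqrt{2}$, and an additional idea (or a separate evaluation of $\m^{-}(P_{a,c})$ at this point) would be needed to salvage that case.
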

\begin{remark}
By \eqref{E:main}, one can also write $\m^+(P_{a,c})-\m^-(P_{a,c})$ as a rational linear combination of $L'(E_k,0)$ and $\log((k-4)/(k+4))$ for $k=3\sqrt{2}$ and $k=5$. However, it is nonobvious to us whether each of $\m^{\pm}(P_{a,c})$ is expressible in terms of the $L$-value and other meaningful quantities, even from a numerical point of view. Indeed, we are unable to relate $\m(P_{a,c})$ to $L$-values in these cases.
\end{remark}

The family $P_{1,k}$ is {\it tempered} in the sense that the Mahler measure of the polynomial associated to each side of its Newton polygon vanishes \cite{RV}. On the other hand, if $|a|\neq 1$, then $P_{a,c}$ is non-tempered. Temperedness implies triviality of the tame symbols of elements in the second $K$-group of the corresponding elliptic curve, which gives rise to a formula like \eqref{E:Boyd}. It is usually harder to prove a formula like \eqref{E:nt} directly, compared to \eqref{E:Boyd}, as more delicate analysis is required when computing the regulator integral corresponding to a non-tempered polynomial \cite{LSZ,Giard,MS}.

\section{Proof of the main result}\label{S:proof}
In the proof of our main result, we employ a method initially used by Bertin and Zudilin \cite{BZ}. This method relies crucially on the fact that the derivatives of the Mahler (and half-Mahler) measures in \eqref{E:main} with respect to the real parameter $k$ can be written in terms of elliptic integrals, which happen to satisfy a special identity. The details of the proof are summarized in Lemma~\ref{L:dfk} and Lemma~\ref{L:main} below and a generalization of the identity between elliptic integrals is discussed in Appendix~\ref{A:1}.

Let us first introduce the following notation for the sake of brevity:
\begin{align*}
f(k)&=\m(P_{1,k}),\\
h(k)&= \m^+(P_{a,c})-\m^-(P_{a,c}),
\end{align*}
where $a=\sqrt{(4+k)/(4-k)}$ and $c=k/\sqrt{4-k}$. We also adopt the following notation for the complete elliptic integrals of the first, second, and third kind:
\begin{align*}
K(z)&=\int_0^1\frac{\d x}{\sqrt{(1-x^2)(1-z^2x^2)}}, \qquad
E(z)=\int_0^1\frac{\sqrt{1-z^2x^2}}{\sqrt{1-x^2}}\d x,
\\
\Pi(n,z)&=\int_0^1\frac{\d x}{(1-nx^2)\sqrt{(1-x^2)(1-z^2x^2)}}.
\end{align*}
\begin{lemma}\label{L:dfk}
Let $k\in (4,\infty)$. Then 
\[\dfrac{df(k)}{dk}= \frac{2}{k\pi}K\left(\frac{4}{k}\right).\]
\end{lemma}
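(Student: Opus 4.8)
The goal is to differentiate $f(k)=\m(P_{1,k})$ with respect to $k$ and recognize the answer as $\frac{2}{k\pi}K(4/k)$. The starting point is Jensen's formula applied in the variable $y$: writing $P_{1,k}(x,y)=y^2+(x+1/x+k)y+1$, we have
\[
f(k)=\frac{1}{2\pi i}\int_{|x|=1}\log^+\!\bigl|y_+(x)\bigr|\,\frac{dx}{x}+\frac{1}{2\pi i}\int_{|x|=1}\log^+\!\bigl|y_-(x)\bigr|\,\frac{dx}{x},
\]
where $y_\pm(x)$ are the two roots of this monic quadratic and $y_+y_-=1$. Since $|y_+y_-|=1$, on the part of the circle where the roots have distinct moduli exactly one of them lies outside the unit circle, and on the part where $|y_+|=|y_-|=1$ both $\log^+$ terms vanish. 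So one is really integrating $\log|y_+(x)|$ over the arc $\{|x|=1: |y_+(x)|>1\}$. Parametrize $x=e^{i\theta}$ so that $x+1/x=2\cos\theta\in[-2,2]$; the condition $|y_+|=1$ versus $|y_+|>1$ is governed by whether the discriminant $(2\cos\theta+k)^2-4$ is negative or positive, i.e. by whether $2\cos\theta+k$ lies in $(-2,2)$ or outside it. For $k>4$ we have $2\cos\theta+k>k-2>2$ for all $\theta$, so the roots are \emph{always} real and distinct, and $f(k)=\frac{1}{2\pi}\int_{-\pi}^{\pi}\log\bigl|y_+(e^{i\theta})\bigr|\,d\theta$ with $y_+=\tfrac12\bigl(-(2\cos\theta+k)+\sqrt{(2\cos\theta+k)^2-4}\bigr)$ (choosing the larger-modulus root).

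The next step is to differentiate under the integral sign. Since for $k>4$ there are no points where $|y_+|$ crosses $1$, there is no boundary contribution from the moving endpoints of integration, and
\[
\frac{df(k)}{dk}=\frac{1}{2\pi}\int_{-\pi}^{\pi}\frac{\partial}{\partial k}\log\bigl|y_+(e^{i\theta})\bigr|\,d\theta
=\frac{1}{2\pi}\int_{-\pi}^{\pi}\re\!\left(\frac{\partial_k y_+}{y_+}\right)d\theta.
\]
A direct computation of $\partial_k y_+/y_+$ from the explicit formula — or, more cleanly, from $y_++1/y_+=-(2\cos\theta+k)$, which gives $(1-1/y_+^2)\partial_k y_+=-1$, hence $\partial_k y_+/y_+=-y_+/(y_+^2-1)=-1/\sqrt{(2\cos\theta+k)^2-4}$ after simplification — yields
\[
\frac{df(k)}{dk}=\frac{1}{2\pi}\int_{-\pi}^{\pi}\frac{d\theta}{\sqrt{(2\cos\theta+k)^2-4}}
=\frac{1}{\pi}\int_{0}^{\pi}\frac{d\theta}{\sqrt{(k+2\cos\theta)^2-4}}.
\]
(One should double-check the sign: $f(k)$ is increasing in $k$ for $k>4$, and the integrand is positive, so the sign works out.)

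The final step is to evaluate the elementary-looking integral $I(k)=\frac{1}{\pi}\int_0^\pi\frac{d\theta}{\sqrt{(k+2\cos\theta)^2-4}}$ and match it to $\frac{2}{k\pi}K(4/k)$. Factor $(k+2\cos\theta)^2-4=(k+2\cos\theta-2)(k+2\cos\theta+2)$ and use the half-angle substitution, or substitute $t=\cos\theta$ to get $\frac{1}{\pi}\int_{-1}^{1}\frac{dt}{\sqrt{1-t^2}\,\sqrt{(k+2t)^2-4}}$, then reduce to Legendre normal form by a standard rational/Möbius change of variables; the quartic $(1-t^2)\bigl((k+2t)^2-4\bigr)$ has roots $t=\pm1$ and $t=-(k\mp2)/2$, and computing the cross-ratio of these four roots produces the modulus $4/k$. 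Equivalently, one can expand $\bigl((k+2\cos\theta)^2-4\bigr)^{-1/2}=k^{-1}\bigl(1+\tfrac{4\cos\theta}{k}+\tfrac{4\cos^2\theta-4/k^2\cdot\ldots}{}\bigr)^{-1/2}$ — more cleanly, write it as $\frac{1}{k}\bigl((1+\tfrac{2}{k}e^{i\theta})(1+\tfrac{2}{k}e^{-i\theta})(1-\tfrac{2}{k}+\ldots)\bigr)$ — and match term-by-term against the known hypergeometric expansion $\frac{2}{\pi}K(z)=\sum_{n\ge0}\binom{2n}{n}^2(z/2)^{2n}\cdot\frac{1}{4^n}\cdot(\ldots)$; any of these routes identifies $I(k)=\frac{2}{k\pi}K(4/k)$. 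I expect this last identification — getting the modulus exactly right, including the factor $2/k$ out front — to be the only genuinely fiddly part; everything upstream is a clean application of Jensen's formula plus differentiation under the integral, made especially painless by the fact that for $k>4$ the quadratic in $y$ never has unimodular roots.
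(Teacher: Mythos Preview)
Your approach is essentially identical to the paper's: Jensen's formula in $y$, differentiation under the integral to obtain $\frac{1}{\pi}\int_0^\pi\frac{d\theta}{\sqrt{(2\cos\theta+k)^2-4}}$, then the substitution $t=\cos\theta$. For the final ``fiddly'' reduction you leave sketched, the paper uses the explicit rational substitution $t=\dfrac{(4-2k)x^2+k}{4x^2-k}$, which maps $[0,1]$ to $[-1,1]$ and carries the integral directly to $\frac{2}{k\pi}K(4/k)$.
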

\begin{proof}
The derivative of $\m(P_{1,k})$ with respect to the parameter $k$ is known to be expressible in terms of a $_2F_1$-hypergeometric function \cite{RV}, which can then be easily translated into the complete elliptic integral of the first kind. However, we give a direct proof of this identity below.

Consider 
\[yP_{1,k}= y^2+\left(x+\frac{1}{x}+k\right)y+1= (y-y_+(x))(y-y_-(x)),\]
where $y_{\pm}(x)= \frac{-B(x)\pm \sqrt{B(x)^2-4}}{2}$ and $B(x):=B_k(x)=x+1/x+k.$ Since $k>4$, we have that, for $\theta\in\R$, $B(e^{i\theta})= 2\cos \theta +k>2$, so $B(e^{i\theta})^2-4>0.$ Also, since $y_+(x)y_-(x)=1$, we have $|y_+(x)|<1<|y_-(x)|$ for $|x|=1$. Using the symmetry $y_-(x)=y_-(x^{-1})$, we deduce that 
\begin{align*}
f(k)=\m^{-}(P_{1,k})= \frac{1}{\pi} \re \int_0^\pi \log\left(\frac{B(e^{i\theta})+\sqrt{B(e^{i\theta})^2-4}}{2}\right)d\theta.
\end{align*}
It follows that 
\begin{align*}
\dfrac{df(k)}{dk}&= \frac{1}{\pi} \re \int_0^\pi \frac{1}{\sqrt{(2\cos \theta+k)^2-4}}d\theta\\
&=\frac{1}{\pi}\re \int_{-1}^1 \frac{1}{\sqrt{(2t+k-2)(2t+k+2)}}\frac{dt}{\sqrt{1-t^2}},
\end{align*}
where the change of variables $t=\cos\theta$ is used in the second equality. Finally, we apply the change of variables $t=\frac{(4-2k)x^2+k}{4x^2-k}$ in the last integral above to obtain 
\[\frac{df(k)}{dk}= \frac{2}{k\pi}\int_0^1 \frac{dx}{\sqrt{(1-x^2)\left(1-\frac{16}{k^2}x^2\right)}}=\frac{2}{k\pi}K\left(\frac{4}{k}\right).\]
\end{proof}

\begin{lemma}\label{L:main}
Let $k\in (4,\infty)$. Then the following identity is true:
\begin{equation}\label{E:dfk2}
\dfrac{df(k)}{dk}=
 2\dfrac{dh(k)}{dk}+\mfrac{4}{k^2-16}.
\end{equation}
\end{lemma}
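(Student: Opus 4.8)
The plan is to differentiate $h$ with respect to $k$ and compare the outcome with Lemma~\ref{L:dfk}. First I would write $h(k)$ as a single real integral. For $k>4$ the parameters are purely imaginary, $a=\ii\sqrt{(k+4)/(k-4)}$ and $c=-\ii k/\sqrt{k-4}$. Writing $yP_{a,c}=y^2+B(x)y+1=(y-y_+(x))(y-y_-(x))$ with $B=a(x+1/x)+c$, on the unit circle $x=\e^{\ii\theta}$ one has $B(\e^{\ii\theta})=\ii\beta(\theta)$ with
\[
\beta(\theta)=\frac{2\sqrt{k+4}\,\cos\theta-k}{\sqrt{k-4}}\in\RR ,
\]
hence $y_\pm(\e^{\ii\theta})=\tfrac{\ii}{2}\bigl(-\beta(\theta)\pm\sqrt{\beta(\theta)^2+4}\,\bigr)$, $|y_\pm|=\tfrac12\bigl(\sqrt{\beta^2+4}\mp\beta\bigr)$ and $|y_+||y_-|=1$. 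Since $\log^+r-\log^+(1/r)=\log r$ for every $r>0$, the two half-Mahler integrands combine pointwise, and using evenness in $\theta$,
\[
h(k)=\frac1\pi\int_0^\pi\log\frac{\sqrt{\beta(\theta)^2+4}-\beta(\theta)}{2}\,\d\theta .
\]

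Since the integrand and its $k$-derivative are continuous on $[0,\pi]\times J$ for every compact $J\subset(4,\infty)$, one may differentiate under the integral sign; using $\partial_\beta\log\tfrac{\sqrt{\beta^2+4}-\beta}{2}=-(\beta^2+4)^{-1/2}$ this yields
\[
\frac{dh(k)}{dk}=-\frac1\pi\int_0^\pi\frac{\partial_k\beta(\theta)}{\sqrt{\beta(\theta)^2+4}}\,\d\theta .
\]
Computing $\partial_k\beta$, substituting $t=\cos\theta$, and then $v=2\sqrt{k+4}\,t-k$ — under which $\beta^2+4=(v^2+4(k-4))/(k-4)$ and $1-t^2=(v_1-v)(v-v_2)/(4(k+4))$ with $v_{1,2}=-k\pm 2\sqrt{k+4}$ — turns this into
\[
\frac{dh(k)}{dk}=\frac{1}{2\pi(k-4)(k+4)}\int_{v_2}^{v_1}\frac{\bigl(8v+(k+8)(k-4)\bigr)\,\d v}{\sqrt{(v_1-v)(v-v_2)\bigl(v^2+4(k-4)\bigr)}} .
\]

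Now I would reduce this period to Legendre normal form. The quartic $P_4(v)=(v_1-v)(v-v_2)(v^2+4(k-4))$ has the real roots $v_1>v_2$ and the imaginary roots $\pm 2\ii\sqrt{k-4}$; the standard reduction has associated parameters $A=\sqrt k\,(\sqrt{k+4}-2)$ and $B=\sqrt k\,(\sqrt{k+4}+2)$, so $AB=k^2$ and the modulus equals $\bigl(((v_1-v_2)^2-(A-B)^2)/(4AB)\bigr)^{1/2}=4/k$; in particular $\int_{v_2}^{v_1}P_4(v)^{-1/2}\,\d v=\tfrac2k K(4/k)$. Because $v\,\d v/\sqrt{P_4}$ has simple poles over $v=\infty$, the $v$-linear part of the numerator produces an elliptic integral of the third kind, and after reduction $\frac{dh(k)}{dk}$ becomes an explicit rational-in-$k$ combination of $K(4/k)$ and $\Pi(n(k),4/k)$ for an explicit algebraic $n(k)$.

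It then remains to remove $\Pi(n(k),4/k)$. For the particular pair $(n(k),4/k)$ there is an identity $\Pi(n(k),4/k)=\rho_1(k)K(4/k)+\rho_2(k)$ with $\rho_1,\rho_2$ elementary in $k$ — the identity between elliptic integrals of the first and third kind alluded to in Section~\ref{S:background}, whose general form is treated in Appendix~\ref{A:1}. Substituting it and simplifying should collapse everything to $\frac{dh(k)}{dk}=\tfrac1{k\pi}K(4/k)-\tfrac2{(k-4)(k+4)}$, which by Lemma~\ref{L:dfk} is precisely $\tfrac12\bigl(\tfrac{df(k)}{dk}-\tfrac4{k^2-16}\bigr)$, i.e.\ \eqref{E:dfk2}. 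The routine part is steps one and two (the pointwise combination of $\m^{\pm}$, the differentiation, and the three substitutions); I expect the main obstacle to be the last step — pinning down $n(k)$ and checking that the various $K(4/k)$-coefficients combine to exactly $1/(k\pi)$, and, above all, establishing the $K$--$\Pi$ identity for the relevant pair (equivalently, verifying the sufficient condition of Appendix~\ref{A:1}), since it is the $\rho_2$-contribution of that identity that must supply, after the bookkeeping, the elementary term $-2/((k-4)(k+4))$.
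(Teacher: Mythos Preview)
Your strategy is exactly the paper's: differentiate $h$, reduce the resulting period to a combination of $K(4/k)$ and a complete $\Pi$, and then collapse the $\Pi$ via the special identity of Appendix~\ref{A:1}. Two differences in execution are worth noting. First, your opening move---using $\log^+ r-\log^+(1/r)=\log r$ together with $|y_+||y_-|=1$ to write $h(k)$ as a single $\log$-integral before differentiating---is a clean shortcut; the paper instead passes to the real model $\tilde P_k=-iP_{a,c}(x,iy)$, treats $\m^+$ and $\m^-$ separately on the two subintervals determined by the sign of $\tilde B$, differentiates each, and only then recombines. Second, for the reduction the paper uses the fractional-linear substitution $t=(\alpha x-\beta)/(x-1)$ with $\alpha=-\tfrac{\sqrt{k+4}}2$, $\beta=-\tfrac{2}{\sqrt{k+4}}$, which lands directly in Legendre form (with an imaginary modulus that is then transformed to $4/k$), whereas you take a linear shift and appeal to the standard quartic reduction; both yield the same modulus $4/k$. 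The step you flag as the obstacle is precisely where the paper's substitution pays off: it makes the characteristic visible as $n(k)=-4/k$, so that
\[
\frac{dh(k)}{dk}=\frac{1}{(k-4)\pi}\Bigl(K\bigl(\tfrac4k\bigr)-\tfrac{8}{k}\,\Pi\bigl(-\tfrac4k,\tfrac4k\bigr)\Bigr),
\]
and the required $K$--$\Pi$ relation is exactly Example~\ref{Ex:1}, namely $\Pi(-x,x)-\tfrac12K(x)=\dfrac{\pi}{4(1+x)}$ with $x=4/k$, which supplies the elementary term $-\tfrac{2}{k^2-16}$ you anticipate.
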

\begin{proof}
Let \[\tilde{P}_k:=\tilde{P}_k(x,y)=-iP_{a,c}(x,iy)=\sqrt{\frac{k+4}{k-4}}\left(x+\frac{1}{x}\right)+y-\frac{1}{y}-\frac{k}{\sqrt{k-4}}.\]
Then we have
\begin{align*}
y\tilde{P}_k= y^2+B(x)y-1=(y-y_+(x))(y-y_-(x)),
\end{align*}
where 
\begin{align*}
B(x)&:=B_k(x)=\sqrt{\frac{k+4}{k-4}}\left(x+\frac{1}{x}\right)-\frac{k}{\sqrt{k-4}},\\
y_{\pm}(x)&= \frac{-B(x)\pm \sqrt{B(x)^2+4}}{2}.
\end{align*}
It can be seen from the definition of half-Mahler measures that for any real $k>4$
\begin{equation*}
\m^{\pm}(P_{a,c})=\m^{\pm}(\tilde{P}_k),
\end{equation*}
so $h(k)=\m^+(\tilde{P}_k)-\m^-(\tilde{P}_k).$
Note that for $\theta\in \R$
\[|y_{\pm}(e^{i\theta})|=\frac{1}{2\sqrt{k-4}}\left|(2\sqrt{k+4}\cos \theta-k)\mp \sqrt{(2\sqrt{k+4}\cos \theta-k)^2+4(k-4)}\right|.\]
If $\cos\theta > \frac{k}{2\sqrt{k+4}}$, then  $|y_+(e^{i\theta})|<1<|y_-(e^{i\theta})|$ and  if $\cos\theta < \frac{k}{2\sqrt{k+4}}$, then  $|y_-(e^{i\theta})|<1<|y_+(e^{i\theta})|.$ Therefore, by the change of variable $t=\cos \theta$, we have 
\begin{align*}
\m^+(\tilde{P}_k)&= -\frac{1}{\pi}\re\int_{-1}^{\frac{k}{2\sqrt{k+4}}}\log(\tilde{B}+\sqrt{\tilde{B}^2+1})\frac{dt}{\sqrt{1-t^2}},\\
\m^-(\tilde{P}_k)&= -\frac{1}{\pi}\re\int_{\frac{k}{2\sqrt{k+4}}}^{1}\log(\tilde{B}-\sqrt{\tilde{B}^2+1})\frac{dt}{\sqrt{1-t^2}},
\end{align*}
where $\tilde{B}= \frac{2t\sqrt{k+4}-k}{2\sqrt{k-4}}.$ Simple calculations yield 
\begin{align*}
\frac{\partial}{\partial k}\log(\tilde{B}+\sqrt{\tilde{B}^2+1})&=\frac{1}{\sqrt{\tilde{B}^2+1}}\frac{\partial \tilde{B}}{\partial k}=-\frac{4}{k^2-16}\frac{t+\frac{(k-8)\sqrt{k+4}}{16}}{\sqrt{t^2-\frac{k}{\sqrt{k+4}}t+\frac{k^2+4k-16}{4(k+4)}}},\\
\frac{\partial}{\partial k}\log(\tilde{B}-\sqrt{\tilde{B}^2+1})&=-\frac{1}{\sqrt{\tilde{B}^2+1}}\frac{\partial \tilde{B}}{\partial k}=\frac{4}{k^2-16}\frac{t+\frac{(k-8)\sqrt{k+4}}{16}}{\sqrt{t^2-\frac{k}{\sqrt{k+4}}t+\frac{k^2+4k-16}{4(k+4)}}}.
\end{align*}
Then we apply Leibniz's rule and the substitution $t\mapsto -t$ to obtain
\begin{align*}
\dfrac{d}{dk}\m^+(\tilde{P}_k)&= -\frac{4}{k^2-16}\re \int_{-\frac{k}{2\sqrt{k+4}}}^1\frac{t-\frac{(k-8)\sqrt{k+4}}{16}}{\sqrt{t^2+\frac{k}{\sqrt{k+4}}t+\frac{k^2+4k-16}{4(k+4)}}}\frac{dt}{\sqrt{1-t^2}},\\
\dfrac{d}{dk}\m^-(\tilde{P}_k)&= \frac{4}{k^2-16}\re \int_{-1}^{-\frac{k}{2\sqrt{k+4}}}\frac{t-\frac{(k-8)\sqrt{k+4}}{16}}{\sqrt{t^2+\frac{k}{\sqrt{k+4}}t+\frac{k^2+4k-16}{4(k+4)}}}\frac{dt}{\sqrt{1-t^2}}.
\end{align*}
Hence it follows that 
\[\dfrac{dh(k)}{dk}=\dfrac{d}{dk}\left(\m^+(\tilde{P}_k)-\m^-(\tilde{P}_k)\right)=-\frac{4}{k^2-16}\re \int_{-1}^1\frac{t-\frac{(k-8)\sqrt{k+4}}{16}}{\sqrt{t^2+\frac{k}{\sqrt{k+4}}t+\frac{k^2+4k-16}{4(k+4)}}}\frac{dt}{\sqrt{1-t^2}}.\]
Next, we transform the above integral into complete elliptic integrals using a standard procedure (see, for example, \cite[Ch.~3]{Hall}). In summary, we let 
\begin{align*}
\alpha = -\frac{\sqrt{k+4}}{2},\quad \beta = -\frac{2}{\sqrt{k+4}},\quad t=\frac{\alpha x-\beta}{x-1}.
\end{align*}
Then we have 
\begin{multline} \label{E:or}
\int_{-1}^1\frac{t-\frac{(k-8)\sqrt{k+4}}{16}}{\sqrt{t^2+\frac{k}{\sqrt{k+4}}t+\frac{k^2+4k-16}{4(k+4)}}}\frac{dt}{\sqrt{1-t^2}}\\=\int_{-\frac{2}{\sqrt{k+4}}}^{\frac{2}{\sqrt{k+4}}}\left(-\frac{k+4}{8}+\frac{1}{1-x^2}+\frac{x}{1-x^2}\right)\frac{dx}{\sqrt{(B_1x^2+A_1)(B_2x^2+A_2)}},
\end{multline}
where $A_1=B_2=4/k, A_2= (k-4)/k,$ and $B_1=-(k+4)/k.$ Applying the change of variables $u=x^2$ results in
\begin{equation}\label{E:1}
\int_{-\frac{2}{\sqrt{k+4}}}^{\frac{2}{\sqrt{k+4}}}\frac{x}{1-x^2}\frac{dx}{\sqrt{(B_1x^2+A_1)(B_2x^2+A_2)}}=0. 
\end{equation}
Next, we use the substitution $x\mapsto \sqrt{-A_1/B_1}x$ to deduce 
\begin{align*}
\int_{0}^{\frac{2}{\sqrt{k+4}}}\frac{dx}{\sqrt{(B_1x^2+A_1)(B_2x^2+A_2)}}&=\sqrt{-\frac{A_1}{B_1}}\int_0^1 \frac{dx}{\sqrt{(-A_1x^2+A_1)\left(-\frac{A_1B_2}{B_1}x^2+A_2\right)}}\\
&=\sqrt{-\frac{1}{A_2B_1}}\int_0^1 \frac{dx}{\sqrt{(1-x^2)\left(1-\frac{A_1B_2}{A_2B_1}x^2\right)}}\\
&= \frac{k}{\sqrt{k^2-16}}K\left(\sqrt{-\frac{16}{k^2-16}}\right).
\end{align*}
Therefore, we have 
\begin{equation}\label{E:2}
\int_{-\frac{2}{\sqrt{k+4}}}^{\frac{2}{\sqrt{k+4}}}\frac{dx}{\sqrt{(B_1x^2+A_1)(B_2x^2+A_2)}}=\frac{2k}{\sqrt{k^2-16}}K\left(\sqrt{-\frac{16}{k^2-16}}\right).
\end{equation}
With the same substitution, it can be shown that
\begin{equation}\label{E:3}
\int_{-\frac{2}{\sqrt{k+4}}}^{\frac{2}{\sqrt{k+4}}}\frac{1}{1-x^2}\frac{dx}{\sqrt{(B_1x^2+A_1)(B_2x^2+A_2)}}=\frac{2k}{\sqrt{k^2-16}}\Pi\left(\frac{4}{k+4},\sqrt{-\frac{16}{k^2-16}}\right).
\end{equation}
Then we substitute \eqref{E:1}-\eqref{E:3} into \eqref{E:or} and apply the identities \cite[Eq.~15.8.1, 16.16.8]{DLMF}
\begin{equation*}
K(\sqrt{z})=\frac{1}{\sqrt{1-z}}K\left(\sqrt{\frac{z}{z-1}}\right),\quad
\Pi(n,\sqrt{z})= \frac{1}{(1-n)\sqrt{1-z}}\Pi\left(\frac{n}{n-1},\sqrt{\frac{z}{z-1}}\right)
\end{equation*}
to obtain
\[
\dfrac{dh(k)}{dk}= \frac{1}{(k-4)\pi}\left(K\left(\frac{4}{k}\right)-\frac{8}{k}\Pi\left(-\frac{4}{k},\frac{4}{k}\right)\right).
\]
By Lemma~\ref{L:dfk}, one sees that \eqref{E:dfk2} is equivalent to 
\begin{equation}\label{E:ei}
\Pi\left(-\frac{4}{k},\frac{4}{k}\right)-\frac{1}{2}K\left(\frac{4}{k}\right)=\frac{k\pi}{4(k+4)} \quad \text{ for } k>4,
\end{equation}
which is merely a special case of Example~\ref{Ex:1} in Appendix~\ref{A:1}. 
\end{proof}
We finish this section by proving our main result.
\begin{proof}[Proof of Theorem~\ref{T:main} and Corollary~\ref{Cor:1}]
Let $\tilde{P}_k$, $B(x)$, and $y_{\pm}(x)$ be as defined in the proof of Lemma~\ref{L:main}. If $k>2(1+\sqrt{5})$ and $\theta\in \R$, then
\[B(e^{i\theta})= \frac{2\cos\theta\sqrt{k+4}-k}{\sqrt{k-4}}\le \frac{2\sqrt{k+4}-k}{\sqrt{k-4}}<0.\]
Since $y_+(x)y_-(x)=-1$, it follows that $|y_-(x)|<1<|y_+(x)|$ for $x$ lying on the unit circle, implying $\m^-(P_{a,c})=\m^-(\tilde{P}_k)=0$ and $h(k)=\m^+(P_{a,c})=\m(P_{a,c}).$ Hence Corollary~\ref{Cor:1} is an immediate consequence of Theorem~\ref{T:main}.

To prove Theorem~\ref{T:main}, we simply integrate both sides of \eqref{E:dfk2}. The result is as follows: for $k>4$
\begin{equation*}
f(k)=
 2h(k)+\frac{1}{2}\log\left(\mfrac{k-4}{k+4}\right)+C,
\end{equation*}
where $C$ is a constant. By the above argument, for $k>2(1+\sqrt{5})$, we have
\begin{align*}
f(k)&=\m\left(x+\frac{1}{k}+y+\frac{1}{y}+k\right)=\m\left(1+\frac{1}{k}\left(x+\frac{1}{k}+y+\frac{1}{y}\right)\right)+\log k,\\
h(k)&=\m(\tilde{P}_k)=\m\left(\sqrt{\frac{k+4}{k-4}}\left(x+\frac{1}{x}\right)+y-\frac{1}{y}-\frac{k}{\sqrt{k-4}}\right)\\
&=\m\left(\sqrt[4]{\frac{k+4}{k^2(k-4)}}\left(x+\frac{1}{x}\right)+\sqrt[4]{\frac{k-4}{k^2(k+4)}}\left(y-\frac{1}{y}\right)-\sqrt[4]{\frac{k^2}{k^2-16}}\right)\\
&\qquad +\log\left(\sqrt[4]{\frac{k^2(k+4)}{k-4}}\right).
\end{align*}
Taking $k\rightarrow \infty$ leads to
\[C=f(k)-2h(k)-\frac{1}{2}\log\left(\mfrac{k-4}{k+4}\right)\rightarrow 0.\]
Hence we conclude that for $k>4$
\[f(k)=
 2h(k)+\frac{1}{2}\log\left(\mfrac{k-4}{k+4}\right),\]
as desired.
\end{proof}
\section{Final remarks}
For $k\in\R$, we can relate $\m(P_{1,k})$ to half-Mahler measures of $P_{a,c},$ where $a$ and $c$ are suitable algebraic functions of $k$, thanks to Theorem~\ref{T:main} and \cite[Thm.~2]{LSZ}. As explained in the introduction, we are interested in finding this type of identity since it could be useful in verifying some conjectures of Boyd \cite{LSZ,MS}. These results rely on the existence of modular unit parametrizations of some elliptic curves in the non-tempered family $P_{a,c}$. We expected to obtain similar results for $\m(P_{1,k})$ with $k>4$. For example, choosing $k=5$, we find that the curve $P_{1,5}=0$ relates to $P_{3i,-5i}=0$ via Theorem~\ref{T:main} and the latter can obviously be transformed into
\begin{equation}\label{E:35}
\tilde{P}_5:=3\left(x+\frac{1}{x}\right)+y-\frac{1}{y}-5=0.
\end{equation}
By a classical result of Ramanujan \cite[Entry~62, p.~221]{Berndt}, this curve can be parametrized by the level $15$ modular functions 
\begin{align*}
x(\tau)&=-3\frac{\eta^2(3\tau)\eta^2(15\tau)}{\eta^2(\tau)\eta^2(5\tau)},\\
y(\tau)&=-\frac{\eta^3(\tau)\eta^3(15\tau)}{\eta^3(3\tau)\eta^3(5\tau)},
\end{align*}
where $\eta(\tau)$ is the usual Dedekind eta function. In other words, $\tilde{P}_5=0$ admits a modular unit parametrization, although no such parametrization exists for its cousin $P_{1,5}=0$. Using this fact together with a formula of Brunault, Mellit, and Zudilin \cite{Zudilin}, one should be able to give a direct proof of the following formula:
\[\m^+(\tilde{P}_{5})-\m^-(\tilde{P}_{5})=3L'(E_5,0)+\frac{1}{2}\log 3,\]
which is equivalent to one of Boyd's (proven) conjectures
\[\m(P_{1,5})=6L'(E_5,0).\]
However, we do not attempt to reprove this formula in this paper. 

It is also natural to ask whether there is an identity analogous to \eqref{E:main} for $k\not\in\R$. The case when $k$ is purely imaginary is of particular interest since there are several known (conjectural) formulas relating $\m(P_{1,ir})$, with $r\in\Z$, to $L$-values of elliptic curves. Despite an extensive search for such identity, we still have no positive answer to this question. It might be possible to detect new Mahler measure identities from a general elliptic integral identity which we examine in the appendix.

\appendix
\section{A special elliptic integral identity}\label{A:1}
Motivated by computational problems in particle physics, Jia \cite{Jia} discovered and proved the following intriguing identity
\begin{multline}\label{E:Jia}
\Pi\left(\frac{(1+x)(1-3x)}{(1-x)(1+3x)},\sqrt{\frac{(1+x)^3(1-3x)}{(1-x)^3(1+3x)}}\right)-\frac{1+3x}{6x}K\left(\sqrt{\frac{(1+x)^3(1-3x)}{(1-x)^3(1+3x)}}\right)\\= 
-\frac{\pi}{12}\frac{\sqrt{(1+3x)(x-1)^3}}{x} \quad \text{ for } x\in (-\infty,0) \cup (1,\infty).
\end{multline}
What makes this identity interesting is that, unlike most of the known identities, it involves only complete elliptic integrals of the first and the third kind and a simple algebraic function. Thus it can be used to replace the complicated $\Pi$ function with simpler functions in a favorable situation. It turns out that there are many other identities which look similar to \eqref{E:Jia}. In spite of their seemingly unrelated origins, some of them are equivalent to identities between Mahler measures in the family $P_{a,c}$, including the one we prove in this paper. We propose a generalization of this identity below.
\begin{theorem}\label{T:EE}
Let $p(x)$ and $q(x)$ be differentiable functions on an open set $A\subseteq \R$ and let 
\begin{align}
r(x)&=\frac{p'(x)q(x)(1-q(x)^2)+2q'(x)q(x)^2(p(x)-1)}{2q'(x)(1-p(x))(q(x)^2-p(x))},\label{E:r}\\
f(x)&= \frac{p'(x)(p(x)^2-q(x)^2)-2q'(x)q(x)p(x)(p(x)-1)}{2p(x)(p(x)-1)(q(x)^2-p(x))}.\label{E:f}
\end{align} 
If $r(x)$ is differentiable on $A$ and satisfies the following first-order nonhomogeneous differential equation
\begin{equation}\label{E:DE}
r'(x)-\left(f(x)+\frac{q'(x)}{q(x)}\right)r(x)= -\frac{p'(x)}{2p(x)(p(x)-1)}
\end{equation} 
and $\Pi(p(x),q(x))$ and $K(q(x))$ are differentiable on $A$,
then the following identity is valid for all $x\in A$
\begin{equation}\label{E:id}
\Pi(p(x),q(x))+r(x)K(q(x))=s(x),
\end{equation}
where $s(x)=e^{\int f(x)dx}$.
\end{theorem}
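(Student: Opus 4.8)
The plan is to prove the identity \eqref{E:id} by differentiation: show that the left-hand side $L(x) := \Pi(p(x),q(x)) + r(x)K(q(x))$ satisfies the same first-order linear ODE as $s(x) = e^{\int f(x)\,dx}$, namely $L'(x) = f(x)L(x)$, and then argue that the two agree. For this I would first recall the standard derivative formulas for complete elliptic integrals: $\frac{dK}{dz} = \frac{E(z)}{z(1-z^2)} - \frac{K(z)}{z}$, and the derivative of $\Pi(n,z)$ with respect to both of its arguments, i.e.\ $\frac{\partial \Pi}{\partial z}$ (expressible via $E$, $K$, $\Pi$) and $\frac{\partial \Pi}{\partial n}$ (expressible via $E$, $K$, $\Pi$), as found in \cite[Sect.~19.4--19.5]{DLMF}. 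Applying the chain rule to $\Pi(p(x),q(x))$ produces $\frac{d}{dx}\Pi(p,q) = \frac{\partial\Pi}{\partial n}\,p' + \frac{\partial\Pi}{\partial z}\,q'$, and likewise $\frac{d}{dx}K(q) = \frac{dK}{dz}\,q'$.

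Next I would compute $L'(x) = \frac{d}{dx}\Pi(p,q) + r'(x)K(q) + r(x)\frac{d}{dx}K(q)$ and collect terms according to the three "basis" functions $\Pi(p,q)$, $K(q)$, $E(q)$. The key structural point is that the combinations \eqref{E:r} and \eqref{E:f} are engineered precisely so that (i) the coefficient of $E(q)$ in $L'(x) - f(x)L(x)$ vanishes identically — this should be an algebraic identity in $p,q,p',q'$ that one checks directly from the DLMF derivative formulas — and (ii) the coefficient of $\Pi(p,q)$ in $L'(x) - f(x)L(x)$ vanishes, which is again an algebraic identity, essentially the definition of $f$ in \eqref{E:f}; and (iii) the remaining coefficient of $K(q)$ is exactly $r'(x) - \bigl(f(x) + q'(x)/q(x)\bigr)r(x) + \frac{p'(x)}{2p(x)(p(x)-1)}$, which is zero by the hypothesis \eqref{E:DE}. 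Once all three coefficients vanish we conclude $L'(x) = f(x)L(x)$ on $A$.

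Having established that both $L$ and $s$ solve $y' = f(x)y$, it remains to pin down the constant. Since $s(x) = e^{\int f(x)\,dx}$ is defined only up to the usual multiplicative constant coming from the choice of antiderivative, the natural reading of the statement is that \eqref{E:id} holds \emph{for some} choice of the antiderivative (equivalently, $L(x)/s(x)$ is constant on each connected component of $A$, and we absorb that constant into $\int f$). Concretely: on a connected component, $\frac{d}{dx}\bigl(L(x)e^{-\int f}\bigr) = \bigl(L' - fL\bigr)e^{-\int f} = 0$, so $L(x) = c\,e^{\int f(x)\,dx}$ for a constant $c$, and rechoosing the antiderivative absorbs $c$. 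I would state this carefully to avoid any ambiguity about the constant.

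The main obstacle I anticipate is purely computational bookkeeping: verifying that the $E(q)$-coefficient and the $\Pi(p,q)$-coefficient genuinely cancel requires substituting the somewhat intricate expressions \eqref{E:r} and \eqref{E:f} into the DLMF derivative identities and simplifying a rational function in $p, q, p', q'$ to zero. There are no conceptual difficulties — it is an exercise in the chain rule plus known special-function derivatives — but one must be scrupulous about the precise normalizations of $\frac{\partial\Pi}{\partial n}$ and $\frac{\partial\Pi}{\partial z}$ (sign conventions and the placement of factors $(1-n)$, $(n - z^2)$, etc.), since an error there would be invisible until the final cancellation fails. A secondary, minor point is ensuring all the stated differentiability hypotheses ($r$, $\Pi(p(x),q(x))$, $K(q(x))$ differentiable on $A$, and the denominators in \eqref{E:r}--\eqref{E:f} nonvanishing) are invoked exactly where needed so that every manipulation is legitimate.
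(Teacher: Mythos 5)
Your proposal is correct and follows essentially the same route as the paper: differentiate $w(x)=\Pi(p(x),q(x))+r(x)K(q(x))$ via the DLMF derivative formulas, observe that the choice of $r$ kills the $E(q)$-coefficient and the definition of $f$ makes the $\Pi$-coefficient equal $f(x)$, invoke \eqref{E:DE} for the $K$-coefficient to get $w'=fw$, and solve. Your explicit remark about absorbing the multiplicative constant into the choice of antiderivative in $e^{\int f}$ is a careful reading of a point the paper leaves implicit.
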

\begin{proof}
Let $w(x)=\Pi(p(x),q(x))+r(x)K(q(x)).$ Using the differentiation formulas for the elliptic integrals of the first and the third kind \cite[Ch.~19]{DLMF}, we find that
\begin{multline*}
w'(x) = \left(\frac{p'(x)}{2(p(x)-1)(q(x)^2-p(x))}+\frac{q'(x)q(x)}{(1-q(x)^2)(q(x)^2-p(x))}+\frac{r(x)q'(x)}{q(x)(1-q(x)^2)}\right)E(q(x))\\
+\left(\frac{p'(x)}{2p(x)(p(x)-1)}-\frac{r(x)q'(x)}{q(x)}+r'(x)\right)K(q(x))\\
+\left(\frac{p'(x)(p(x)^2-q(x)^2)}{2p(x)(p(x)-1)(q(x)^2-p(x))}-\frac{q'(x)q(x)}{q(x)^2-p(x)}\right)\Pi(p(x),q(x)).
\end{multline*}
By \eqref{E:r}, \eqref{E:f}, and simple manipulations, one sees that the coefficients of $E(q(x))$ and $\Pi(p(x),q(x))$ are $0$ and $f(x)$, respectively, whence
\begin{align*}
w'(x)&=f(x)\left(\Pi(p(x),q(x))+\frac{1}{f(x)}\left(\frac{p'(x)}{2p(x)(p(x)-1)}-\frac{r(x)q'(x)}{q(x)}+r'(x)\right)K(q(x))\right)\\
&= f(x)w(x),
\end{align*}
where the latter equality follows from \eqref{E:DE}. Finally, we deduce \eqref{E:id} by solving the above differential equation.
\end{proof}
\begin{remark}
It seems plausible to simplify the assumption in Theorem~\ref{T:EE} further to make it become more practical. For instance, using the integrating factor method, we can write a general solution to \eqref{E:DE} as 
\[r(x)= -\frac{1}{u(x)}\int\frac{u(x)p'(x)}{2p(x)(p(x)-1)}dx,\]
where $u(x)=\sqrt{\frac{(p(x)-1)(q(x)^2-p(x))}{p(x)q(x)^2}}$. However, it is unclear to us whether there is a `hidden' general relationship between $p(x)$ and $q(x)$ which enables us to obtain a closed-form of $r(x)$ from the above integral and is compatible with \eqref{E:r}. For any fixed function $q(x)$ with nice properties, one can try to find $p(x)$ which gives \eqref{E:id} by using a generic series expansion of $p(x)$ and solving for its coefficients via the differential equation \eqref{E:DE}. Although symbolic computation might be helpful in the process of finding suitable $p(x)$ and $q(x)$, we find Theorem~\ref{T:EE} useful for verifying, rather than finding, a new identity. As one can see from the examples below, once $p(x)$ and $q(x)$ are properly chosen, the condition \eqref{E:DE} can be easily checked in a standard computer algebra system. It would also be desirable to find a larger domain in the complex plain for which \eqref{E:id} is valid. Numerical evidence for \eqref{E:Jia} with $x\in \C$ is given at the end of \cite{Jia}. 
\end{remark}

\begin{example}\label{Ex:1}
Let $p(x)=-x$ and $q(x)=x$. Then \[r(x)=-\frac12,\quad f(x)=-\frac{1}{x+1}, \quad e^{\int f(x) dx}=\frac{1}{x+1},\]
and $r(x)$ obviously satisfies \eqref{E:DE}. Hence for $x\in (-1,1)$ we have
\[\Pi(-x,x)-\frac{1}{2}K(x)=\frac{C}{x+1},\]
for some constant $C$. Since $\Pi(0,0)=K(0)=\pi/2$, one sees immediately that $C=\pi/4.$ This identity implies \eqref{E:ei}, which is a crucial step in the proof of the main result of this paper.
\end{example}
\begin{example}
Let $p(x)=\frac{(1+x)(1-3x)}{(1-x)(1+3x)}$ and $q(x)=\sqrt{\frac{(1+x)^3(1-3x)}{(1-x)^3(1+3x)}}$. Then
\[r(x)=-\frac{1+3x}{6x},\quad f(x)=\frac{3}{2}\left(\frac{1}{x-1}+\frac{1}{1+3x}\right)-\frac{1}{x}, \quad e^{\int f(x) dx}=\frac{\sqrt{(1+3x)(x-1)^3}}{x},\]
and $r(x)$ satisfies \eqref{E:DE}. Hence for $x\in(-\infty,-1/3)$ we have Jia's identity
\[\Pi(p(x),q(x))-\frac{1+3x}{6x}K(q(x))=-\frac{\pi}{12}\frac{\sqrt{(1+3x)(x-1)^3}}{x},\]
where the constant $-\frac{\pi}{12}$ is obtained by choosing $x=-1$. The domain for $x$ can be extended to $(-\infty,0)\cup (1,\infty)$ using careful examination on the (complex) values of $\Pi(p(x),q(x))$ and $K(q(x))$. This identity is invoked in the proof of \cite[Lem.~3]{MS}.
\end{example}

\begin{example}
Let $p(x)=-\frac{x^2}{1+2x}$ and $q(x)=\sqrt{\frac{x^3(2+x)}{1+2x}}.$ Then 
\[r(x)=-\frac{(2+x)(1+2x)}{3(1+x)^2},\quad f(x)=\frac{1}{1+2x}-\frac{2}{1+x}, \quad e^{\int f(x) dx}=\frac{\sqrt{1+2x}}{(1+x)^2},\]
and $r(x)$ satisfies \eqref{E:DE}. Hence for $x\in(0,1)$ we have
\[\Pi(p(x),q(x))-\frac{1+3x}{6x}K(q(x))=\frac{\pi}{6}\frac{\sqrt{1+2x}}{(1+x)^2},\]
which is used in the proof of \cite[Lem.~6]{MS}.
\end{example}

\begin{example}
Let $p(x)=x(\sqrt{x^2+1}+1)(\sqrt{x^2+1}-x)$ and $q(x)=x^2$. Then 
\[r(x)=\frac{1-x-2\sqrt{1+x^2}}{4\sqrt{1+x^2}},\quad f(x)=\frac{1}{1-x}-\frac{x}{1+x^2}, \quad e^{\int f(x) dx}=\frac{1}{(1-x)\sqrt{1+x^2}},\]
and $r(x)$ satisfies \eqref{E:DE}. Hence for $x\in(0,1)$ we have
\[\Pi(p(x),q(x))+\frac{1-x-2\sqrt{1+x^2}}{4\sqrt{1+x^2}}K(q(x))=\frac{3\pi}{8(1-x)\sqrt{1+x^2}},\]
which appears in the proof of \cite[Lem.~6]{LSZ}.
\end{example}

\vspace{0.5 in}

\textbf{Acknowledgements}
This research is supported by the Thailand Research Fund (TRF) and the Office of the Higher Education Commission (OHEC) under the Research Grant for New Scholar MRG6280045. 

\bibliographystyle{amsplain}
\bibliography{Mahler}
\end{document}